\newtheorem{thm}{Theorem}
\theoremstyle{definition}
\newtheorem{lem}{Lemma}
\newtheorem{crl}{Corollary}
\newcommand{\Z}{\mathbb{Z}}
\newcommand{\R}{\mathbb{R}}
\newcommand{\C}{\mathbb{C}}
\author{Cole Hugelmeyer} 
\title{Every smooth Jordan curve has an inscribed rectangle with aspect ratio equal to $\sqrt{3}$}
\begin{document} 
\maketitle 
\abstract{We use Batson's lower bound on the nonorientable slice genus of $(2n,2n-1)$-torus knots to prove that for any $n \geq 2$, every smooth Jordan curve has an inscribed rectangle of of aspect ratio $\tan(\frac{\pi k}{2n})$ for some $k\in \{1,...,n-1\}$. Setting $n = 3$, we have that every smooth Jordan curve has an inscribed rectangle of aspect ratio $\sqrt{3}$.}

\section{Introduction}
Every Jordan curve has an inscribed rectangle. Vaughan proved this fact by parameterizing a M\"obius strip above the plane which bounds the curve, and for which self-intersections correspond to inscribed rectangles. \cite{survey} If we fix a positive real number $r$, we may ask if every Jordan curve has an inscribed rectangle of aspect ratio $r$. For all $r\neq 1$, this problem is open, even for smooth or polygonal Jordan curves. The case of $r = 1$ is the famous inscribed square problem, which has been resolved for a large class of curves. The known partial results include \cite{thesis}, in which it is shown that curves which are suitably close to being convex must have inscribed rectangles of aspect ratio $\sqrt{3}$, and \cite{convex}, in which it is shown that every convex curve has an inscribed rectangle of every aspect ratio. 

We present a 4-dimensional generalization of Vaughan's proof which lets us get some control on the aspect ratio of the inscribed rectangles. In particular, we will resolve the case of $r = \sqrt{3}$ for all smooth Jordan curves.

In regards to notation, $M$ will denote the M\"obius strip $\text{Sym}_2(S^1) = (S^1\times S^1)/(\Z/2\Z)$, where the $\Z/2\Z$ action consists of swapping the elements of the ordered pair. We write $\{x,y\}$ to denote the equivalence class represented by the pair $(x,y)$. As a notational convenience, we abide by the convention that $S^1$ denotes the unit complex numbers.

\section{The Proof}
\begin{thm}
Let $\gamma: S^1\to \C$ be a $C^\infty$ injective function with nowhere vanishing derivative.  Then for all integers $n\geq 2$, there exists an integer $k\in \{1,...,n-1\}$ so that $\gamma$ has an inscribed rectangle of aspect ratio $\tan(\frac{\pi k}{2n})$. 
\end{thm}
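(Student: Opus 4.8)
The plan is to generalize Vaughan's construction from a map of a Möbius strip into $\R^3$ to a map of a Möbius strip into $\R^4$, where the extra dimension records aspect-ratio information, and then to obstruct embeddability using the knot-theoretic input from Batson's bound on the nonorientable slice genus.

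\textbf{Setup: rectangles as symmetric configurations.} First I would recall that four points $\gamma(a),\gamma(b),\gamma(c),\gamma(d)$ on the curve form a rectangle precisely when the two diagonals $\{a,c\}$ and $\{b,d\}$ share a common midpoint and have equal length. Thus I would work with the map on $M = \mathrm{Sym}_2(S^1)$ sending an unordered pair $\{x,y\}$ to the midpoint $\tfrac{1}{2}(\gamma(x)+\gamma(y)) \in \C \cong \R^2$ together with some encoding of the ``spread'' of the chord — for Vaughan one uses the length $|\gamma(x)-\gamma(y)|$ as a third coordinate, giving a map $M \to \R^3$ whose restriction to the boundary circle (the diagonal $x=y$) is the embedded curve $\gamma$ itself sitting in the plane $\{z=0\}$, and whose double points are exactly pairs of chords with the same midpoint and the same length, i.e. rectangles or degenerate rectangles. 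The new idea is to use \emph{two} extra real coordinates recording the chord vector $\gamma(x)-\gamma(y)$ in a way that is equivariant under the pair-swap: since swapping negates $\gamma(x)-\gamma(y)$, I would pass through the squaring map $z \mapsto z^2$ on $\C$, or equivalently record the symmetric function, so that the chord data descends to a well-defined function on $M$. This produces a map $\Phi_\gamma: M \to \R^4$, an immersion away from the boundary (using the nonvanishing derivative hypothesis), restricting to an embedding of $\partial M = S^1$ on the boundary.

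\textbf{The rotation trick to control aspect ratio.} The point of having the full (squared) chord vector rather than just its length is that a rectangle with a prescribed aspect ratio corresponds to two chords through a common midpoint whose \emph{directions differ by a specific angle}: if the chords make angle $\theta$ with each other, the inscribed quadrilateral with those chords as diagonals is a rectangle of aspect ratio $\tan(\theta/2)$. So I would compose $\Phi_\gamma$ with a rotation of the $\C$-factor carrying the squared chord data by angle $2\cdot\frac{\pi k}{2n} = \frac{\pi k}{n}$, get maps $\Phi_\gamma^{(k)}: M \to \R^4$ for $k = 0,1,\dots,n-1$, and observe: an intersection point between the interior of $\Phi_\gamma$ and the interior of $\Phi_\gamma^{(k)}$ for $k \neq 0$ is exactly an inscribed rectangle of aspect ratio $\tan(\frac{\pi k}{2n})$ (the $k=0$ case being the self-intersections, i.e. inscribed squares, which we do not control here). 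Now suppose, for contradiction, that $\gamma$ has \emph{no} inscribed rectangle of aspect ratio $\tan(\frac{\pi k}{2n})$ for any $k \in \{1,\dots,n-1\}$. Then the images of $\Phi_\gamma, \Phi_\gamma^{(1)}, \dots, \Phi_\gamma^{(n-1)}$ are pairwise disjoint in their interiors, and I can glue the $n$ copies of $M$ along their common boundary circle $\gamma$ — the rotations all fix the boundary since the chord vanishes there — to build a single closed (nonorientable) surface mapped into $\R^4$, or better, a surface-with-structure in $S^4$.

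\textbf{Knot theory and the contradiction.} The $n$ Möbius strips share the boundary knot $\gamma \subset \R^2 \subset \R^4$, which is unknotted, but the construction is engineered so that pushing everything slightly into the fourth coordinate and taking the double branched cover, or directly computing with the chords, exhibits a nonorientable surface in $B^4$ bounding the $(2n, 2n-1)$-torus knot (the rotation by $\frac{\pi k}{n}$ increments are what produce the torus-knot braiding when the $n$ sheets are cabled together). Each $M$ contributes first Betti number $1$ to the glued-up nonorientable spanning surface, so the total nonorientable genus (crosscap number) of the resulting surface for the $(2n,2n-1)$-torus knot is at most something like $n$ or $n-1$; meanwhile Batson's theorem gives a lower bound on the nonorientable slice genus of $T(2n,2n-1)$ that grows faster (it is roughly quadratic, $\sim \frac{(n-1)^2}{2}$ or similar), so for $n \geq 2$ these are incompatible — contradiction. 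I expect the \textbf{main obstacle} to be the bookkeeping in this last step: verifying that the glued surface is embedded (smoothness and the nonvanishing-derivative hypothesis are needed to rule out boundary pathologies and ensure the sheets meet the boundary cleanly), identifying the resulting knot precisely as $T(2n,2n-1)$ rather than some other torus knot or cable, and getting the constant in the crosscap-number upper bound right so that it genuinely contradicts Batson's lower bound. Setting $n = 3$ then yields an inscribed rectangle of aspect ratio $\tan(\pi/6) = 1/\sqrt{3}$ or $\tan(\pi/3) = \sqrt{3}$, and since aspect ratios $r$ and $1/r$ describe the same rectangle, we get aspect ratio $\sqrt{3}$ as claimed.
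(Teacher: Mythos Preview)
Your overall strategy---encode chord data via a power map so that it descends to the M\"obius strip $M$, interpret double points as inscribed rectangles of the target aspect ratios, and obstruct via Batson's nonorientable slice-genus bound---is exactly the paper's. But the middle of your argument, where you build $n$ rotated copies $\Phi_\gamma^{(k)}$ from the squaring map $z\mapsto z^2$ and then ``glue the $n$ copies of $M$ along their common boundary circle,'' does not work as written: for $n\geq 3$ the glued object is not a surface (three or more sheets meet along a circle), so there is no crosscap number to compare against Batson, and your suggestion that this configuration somehow exhibits a surface bounding $T(2n,2n-1)$ is left entirely unsubstantiated. A smaller error: Batson's lower bound for $T_{2n,2n-1}$ is $n-1$, linear in $n$, not quadratic.

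The paper avoids the gluing problem by replacing your $n$ rotated strips with a \emph{single} M\"obius strip, via
\[
\mu\{x,y\}=\left(\tfrac{1}{2}\bigl(\gamma(x)+\gamma(y)\bigr),\;(\gamma(y)-\gamma(x))^{2n}\right)\in\C^2.
\]
Raising to the $2n$-th power already identifies all $n$ of your rotations, so a self-intersection of this one strip is precisely an inscribed rectangle of aspect ratio $\tan(\pi k/2n)$ for some $k\in\{1,\dots,n-1\}$. Under the contradiction hypothesis $\mu$ is then a smooth embedding; intersecting with the boundary of a small tubular neighborhood of $\C\times\{0\}$ produces a single connected knot $K_n\subset\C\times S^1$ parameterized by $g\mapsto(g,g^{2n})$, and a twisted embedding of this solid torus into $S^3$ carries $K_n$ to $T_{2n,2n-1}$. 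Now the arithmetic is clean: one M\"obius strip has nonorientable genus $1$, Batson gives $\geq n-1$, so $1\geq n-1$ forces $n\leq 2$. The case $n=2$ is not handled by this argument at all---contrary to your claim---but is covered separately by the known smooth inscribed square theorem.
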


We first prove a lemma.

\begin{lem}
Let $K_n$ denote the knot in $\C\times S^1$ parameterized by $g\mapsto (g,g^{2n})$ for $g\in S^1$. Then if $n \geq 3$, there is no smooth embedding of the M\"obius strip $M\hookrightarrow \C\times S^1\times \R_{\geq 0}$ such that $\partial M$  maps to $K_n\times \{0\}$.
\end{lem}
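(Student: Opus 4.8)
The plan is to argue by contradiction and transport a hypothetical M\"obius strip up a cyclic cover, where it turns into a M\"obius band spanning the torus knot $T(2n,2n-1)$ in the $4$-ball --- something Batson's bound forbids once $n\geq 3$. So suppose $\phi:M\hookrightarrow \C\times S^1\times\R_{\geq 0}$ is a smooth embedding with $\phi(\partial M)=K_n\times\{0\}$. The first thing to nail down is the homotopy data: $\C\times S^1\times\R_{\geq 0}$ deformation retracts onto its $S^1$-factor, so $H_1\cong\Z$; since $K_n$ winds $2n$ times around $S^1$ and the boundary circle of a M\"obius strip is homologous to twice its core, the core of $M$ represents $n\in\Z$ there.

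The key step is to pass to the $(2n-1)$-fold cyclic cover $p:\C\times S^1\times\R_{\geq 0}\to\C\times S^1\times\R_{\geq 0}$, $(z,v,t)\mapsto(z,v^{2n-1},t)$. Because $\gcd(n,2n-1)=1$, the composite $\pi_1(M)\to\pi_1(\text{base})=\Z\to\Z/(2n-1)\Z$ is onto, so $M$ lifts to the connected $(2n-1)$-fold cyclic cover $\widetilde M$; and since $2n-1$ is odd, a cyclic cover of a M\"obius strip of odd degree is again a M\"obius strip, embedded by a lift $\widetilde\phi$ (the lift is injective because $\phi$ is and the deck action is free, and it is proper since $\widetilde M$ is compact). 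On the boundary, $\gcd(2n,2n-1)=1$ forces $p^{-1}(K_n)=\{(z,v):z\in S^1,\ v^{2n-1}=z^{2n}\}$ to be a single embedded circle, parametrized by $s\mapsto(s^{2n-1},s^{2n})$; this is exactly the $(2n-1,2n)$-curve on the Heegaard torus $S^1\times S^1\subset\C\times S^1$, and it is $\partial\widetilde M$.

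To finish, observe that $\widetilde M$ is compact, hence lies in $D^2\times S^1\times[0,T]$ for a suitably large disk $D^2\subset\C$ and some $T$. Identifying $\C\times S^1$ with the complement of an unknot in $S^3$ (so that the $(2n-1,2n)$-curve on the Heegaard torus becomes the standard torus knot $T(2n,2n-1)$, disjoint from the removed unknot) and identifying $\C\times S^1\times\R_{\geq 0}$ correspondingly with $B^4$ minus a neighborhood of an unknotted spanning disk and a collar of part of $\partial B^4$, the compact surface $\widetilde M$ misses the deleted pieces and so becomes a smoothly embedded M\"obius band in $B^4$ whose boundary is $T(2n,2n-1)\subset S^3=\partial B^4$. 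Hence the nonorientable slice genus of $T(2n,2n-1)$ is at most $1$, contradicting Batson's lower bound, which for $n\geq 3$ gives $\gamma_4(T(2n,2n-1))\geq n-1\geq 2$.

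The part that I expect to require genuine care is precisely this last identification: choosing the diffeomorphism from $\C\times S^1\times\R_{\geq 0}$ to a piece of $B^4$ so that the $t=0$ locus lands in $\partial B^4$ and the curve $\{(s^{2n-1},s^{2n})\}$ is recognized, via the slopes of the two meridians of the Heegaard splitting, as the honest torus knot $T(2n,2n-1)$ --- together with double-checking the lift computations ($\widetilde M$ connected and nonorientable, $\partial\widetilde M$ connected). Everything else is winding-number bookkeeping and a citation of Batson.
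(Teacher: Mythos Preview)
Your argument is correct, but it takes a genuinely different route from the paper's. The paper observes that $\C\times S^1$ is the interior of a solid torus and simply embeds that solid torus in $S^3$ \emph{with a single axial twist}; under such an embedding the curve $g\mapsto(g,g^{2n})$, which is a $(1,2n)$-curve on the boundary torus, becomes a $(1-2n,2n)$-curve, i.e.\ the torus knot $T_{2n,2n-1}$ itself, and any M\"obius band bounding $K_n$ is carried directly to one bounding $T_{2n,2n-1}$ in $B^4$, contradicting Batson. No covering spaces, no lifting lemmas---just one framing choice. Your approach instead untwists the situation by passing to the $(2n-1)$-fold cyclic cover, where the lifted boundary is already the $(2n-1,2n)$-curve on the standard torus and a \emph{0-framed} embedding suffices; the price is that you must verify the lift of $M$ stays connected, nonorientable, and embedded (all of which you do correctly, using $\gcd(n,2n-1)=1$ and the oddness of $2n-1$). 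Both routes land on the same Batson bound $\gamma_4(T_{2n,2n-1})\geq n-1$; the paper's is shorter and avoids the covering bookkeeping, while yours has the minor conceptual advantage of not requiring one to spot the right twist.
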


\begin{proof}
The manifold $\C\times S^1$ is diffeomorphic to the interior of the solid torus, so given any embedding of the solid torus into $S^3$, the image of $K_n$ under this embedding yields a knot in $S^3$ which must have non-orientable 4-genus at most that that of $K_n$. By embedding the solid torus with a single axial twist, we can make the image of $K_n$ be the torus knot $T_{2n,2n-1}$. In \cite{Batson}, Batson uses Heegaard-Floer homology to show that the non-orientable 4-genus of $T_{2n,2n-1}$ is at least $n-1$. This means that for $n\geq 3$, the knot $K_n$ cannot be bounded by a M\"obius strip in the 4-manifold $\C\times S^1\times \R_{\geq 0}$.
\end{proof}
\begin{proof}[Proof of Theorem 1]
Suppose $\gamma: S^1\to \C$ is a $C^\infty$ injective function with nowhere vanishing derivative, and no inscribed rectangles of aspect ratio $\tan(\frac{\pi k}{2n})$ for any $k\in \{1,...,n-1\}$. By the smooth inscribed square theorem, we can assume $n\geq 3$. We define $\mu: M\to \C^2$ by the formula $$ \mu\{x,y\} = \left( \frac{\gamma(x) + \gamma(y)}{2},  (\gamma(y) - \gamma(x))^{2n} \right) $$ We see that if $\mu\{x,y\} = \mu\{w,z\}$, then the pairs share a midpoint $m$, and the angle at $m$ between $x$ and $w$ must be a multiple of $\pi/n$. Therefore, either $\{x,y\} = \{w,z\}$ or $(x,w,y,z)$ form the vertices of a rectangle of aspect ratio $\tan(\frac{\pi k}{2n})$ for some $k\in \{1,...,n-1\}$. Furthermore, we see that the differential of $\mu$ is non-degenerate. Therefore, $\mu$ is a smooth embedding of $M$ into $\C^2$. If we take a small tubular neighborhood $N$ around $\C\times\{0\}$, the M\"obius strip $\mu(M)$ intersects $\partial N$ at a knot which is isotopy equivalent to the knot $K_n$ described in our lemma. We have therefore constructed a smooth M\"obius strip bounding $K_n$, which contradicts Lemma 1. 
\end{proof}

\begin{crl}
Every smooth Jordan curve has an inscribed rectangle of aspect ratio $\sqrt{3}$. 
\end{crl}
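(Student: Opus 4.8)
The plan is to specialize Theorem 1 to the case $n = 3$ and then reconcile the two aspect ratios it produces. First, given a smooth Jordan curve $C \subset \C$, I would fix a parameterization $\gamma \colon S^1 \to \C$ realizing $C$ as its image. Since $C$ is a smoothly embedded circle, such a $\gamma$ can be chosen to be $C^\infty$, injective, and with nowhere vanishing derivative, so that the hypotheses of Theorem 1 are met. (If one's preferred definition of ``smooth Jordan curve'' is literally the image of such a map, this step is vacuous.)

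Next I would invoke Theorem 1 with $n = 3$. The conclusion is that there exists $k \in \{1, 2\}$ such that $\gamma$ has an inscribed rectangle of aspect ratio $\tan(\tfrac{\pi k}{6})$. Evaluating, the two possibilities are $\tan(\tfrac{\pi}{6}) = \tfrac{1}{\sqrt{3}}$ and $\tan(\tfrac{\pi}{3}) = \sqrt{3}$.

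Finally I would observe that ``aspect ratio'' is only defined up to taking reciprocals: a rectangle of aspect ratio $r$ is literally the same rectangle as one of aspect ratio $1/r$, depending on which of the two side lengths one designates as the numerator. In particular a rectangle of aspect ratio $\tfrac{1}{\sqrt{3}}$ is a rectangle of aspect ratio $\sqrt{3}$. Hence in both cases $k = 1$ and $k = 2$ the curve $C$ has an inscribed rectangle of aspect ratio $\sqrt{3}$, which is the claim. There is no real obstacle here beyond this last bookkeeping remark; all of the mathematical content lives in Theorem 1 and, through it, in Batson's bound on the nonorientable slice genus of $T_{2n,2n-1}$.
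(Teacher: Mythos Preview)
Your proof is correct and follows essentially the same approach as the paper: apply Theorem~1 with $n=3$, compute the two possible aspect ratios $\tan(\pi/6)=1/\sqrt{3}$ and $\tan(\pi/3)=\sqrt{3}$, and note that aspect ratio is only defined up to reciprocals. The only difference is that you spell out the initial parameterization step a bit more carefully.
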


\begin{proof}
Apply Theorem 1 to $n = 3$. We have a rectangle of aspect ratio $\tan(\frac{1}{6\pi}) = \frac{1}{\sqrt{3}}$ or $\tan(\frac{2}{6\pi}) = \sqrt{3}$. Aspect ratio is only defined up to reciprocals, so either way, we have a rectangle of the desired aspect ratio. 
\end{proof}

\nocite{*}

\bibliography{Refrences}{}
\bibliographystyle{plain}

\end{document}